\documentclass{amsart}
\usepackage{amssymb}
\usepackage{amsfonts}
\usepackage{dsfont}
\usepackage{hyperref}

\newtheorem{theorem}{Theorem}[section]
\newtheorem{lemma}[theorem]{Lemma}
\newtheorem{corollary}[theorem]{Corollary}

\theoremstyle{definition}
\newtheorem{definition}[theorem]{Definition}

\newtheorem{problem}[theorem]{Problem}
\theoremstyle{remark}
\newtheorem{remark}[theorem]{Remark}

\numberwithin{equation}{section}

\newcommand{\PP}{\mathbb P}
\newcommand{\EE}{\mathbb E}
\newcommand{\cA}{\mathcal A}
\newcommand{\cB}{\mathcal B}
\newcommand{\cF}{\mathcal F}
\newcommand{\cX}{\mathcal X}
\newcommand{\Bad}{\operatorname{Bad}}
\newcommand{\Cov}{\operatorname{Cov}}
\newcommand{\one}{\mathbf 1}
\newcommand{\ind}{\mathds 1}

\begin{document}

\title{On $p$-Spread Measures}

\author{Chen Li}
\address{Department of Mathematics, Courant Institute School of Mathematics, Computing, and Data Science, New York University, New York, New York 10012}
\curraddr{}
\email{chen.li1@cims.nyu.edu}
\thanks{}

\subjclass[2020]{}

\date{}

\dedicatory{}

\keywords{}

\begin{abstract}
We study $p$-spread probability measures on the Boolean lattice.
We show that if a set family is large under the product Bernoulli-$p$ measure, then no $p$-spread measure can be supported on the family of sets not covered by the union of two of its members, answering the fractional version of Talagrand's discrete convexity problem.
Consequently, we establish two comparison theorems between $p$-spread and product Bernoulli-$p$ measures.
\end{abstract}

\maketitle

\section{Introduction}\label{sec:intro}
Talagrand's convexity problem asks whether one can create convexity by applying a bounded number of Minkowski sums to a large but otherwise arbitrary set in high-dimensional Gaussian space \cite{Talagrand1995, Talagrand2010, Talagrand2026}.
In \cite{HST}, Hua, Song, and Tudose resolved this problem, building on Song's formulation that the convexity problem is equivalent to a sum-of-Gaussian problem for sub-Gaussian vectors \cite{Song2026}.

Talagrand also raised two discrete counterparts of the convexity problem, one integral and one fractional version, replacing Gaussian with Bernoulli random variables and Minkowski sums with unions \cite{Talagrand1995, Talagrand2010, Talagrand2021, Talagrand2026}.
This paper settles the fractional version of the discrete convexity problem.

We now introduce the two problems.
For a positive integer $N$, write $[N]=\{1,\ldots,N\}$ and identify
$2^{[N]}$ with $\{0,1\}^N$.
For $p\in(0,1)$, let $\mu_p$ be the product Bernoulli-$p$ measure on $2^{[N]}$:
\[
    \mu_p(\{S\})=p^{|S|}(1-p)^{N-|S|}
    \qquad\text{for every }S\subseteq[N].
\]
For $I\subseteq[N]$, define the principal up-class
\[
    H_I=\{S\subseteq[N]:I\subseteq S\}.
\]
For a positive integer $m$ and $\cA\subseteq2^{[N]}$, define
\[
    \Cov_m(\cA)
    =\bigl\{X\subseteq[N]:
    X\subseteq A_1\cup\cdots\cup A_m
    \text{ for some }A_1,\ldots,A_m\in\cA\bigr\},
\]
and
\[
    \Bad_m(\cA)=2^{[N]}\setminus\Cov_m(\cA).
\]
When $\cA$ is large, $\Bad_m(\cA)$ is, in some sense, very far from $\cA$.
What can we say about its structural properties?
The structure of interest for the discrete convexity problem is the notion of $p$-smallness:
\begin{definition}
A family $\cX\subseteq2^{[N]}$ is \emph{$p$-small} if there is a collection $\mathcal I\subseteq2^{[N]}$ such that
\[
    \cX\subseteq\bigcup_{I\in\mathcal I}H_I
    \qquad\text{and}\qquad
    \sum_{I\in\mathcal I}p^{|I|}\le\frac12.
\]
\end{definition}
\begin{remark}
Simply put, $\mathcal I$ is a concrete witness of $\cX$ being small.
This definition was motivated by the fact that the complements of large balanced convex sets in Gaussian measure can be explicitly witnessed by half-spaces \cite[Theorem~2.11.9]{Talagrand2021}, a consequence of the majorizing measure theorem.
\end{remark}
The discrete convexity problem (\cite[Problem~4.3]{Talagrand1995}, \cite[Conjecture~7.1]{Talagrand2010}, \cite[Research~Problem~13.3.2]{Talagrand2021}, \cite[Problem~3.3]{Talagrand2026}) can be stated as follows:
\begin{problem}[Discrete convexity]\label{prob:discrete-convexity}
Does there exist a positive integer $m$ such that, for every $N\ge1$,
$p\in(0,1)$ and $\cA\subseteq2^{[N]}$ with $\mu_p(\cA)\ge3/4$, the family
$\Bad_m(\cA)$ is $p$-small?
\end{problem}
Problem~\ref{prob:discrete-convexity} remains open. 
As a consequence of the convexity problem and the argument in \cite[Proof~of~(3.7)]{Talagrand1995}, the discrete convexity problem is proven under the weaker conclusion of $p^m$-smallness \cite[Corollary~1.4]{HST}.
\begin{remark}
    Problem~\ref{prob:discrete-convexity} has a reformulation in the language of \emph{$k$-thresholds} by Ascoli, He, Park, and Talagrand, who also proved its validity for certain classes of graph containment properties \cite{AHPT}.
\end{remark}

In \cite{Talagrand2010}, Talagrand developed the notion of \emph{weakly $p$-small} sets, a relaxation of $p$-smallness that allows witnesses to carry fractional weights.
\begin{definition}
A family $\cX\subseteq2^{[N]}$ is \emph{weakly $p$-small} if there are weights $w_I\ge0$, indexed by $I\subseteq[N]$, such that
\[
    \sum_{I\subseteq S}w_I\ge1\quad(S\in\cX),
    \qquad
    \sum_{I\subseteq[N]}w_Ip^{|I|}\le\frac12.
\]
\end{definition}
The fractional version of the discrete convexity problem \cite[Conjecture~7.2]{Talagrand2010} replaces ``$p$-small'' with the weaker statement ``weakly $p$-small'' in Problem~\ref{prob:discrete-convexity}.
We will use the following essentially equivalent dual concept (see \cite[Proposition~6.7]{Talagrand2010}):
\begin{definition}
A probability measure $\nu$ on $2^{[N]}$ is called \emph{$p$-spread} if
\begin{equation}\label{eq:spread}
    \nu(H_I)\le p^{|I|}
    \qquad\text{for every }I\subseteq[N].
\end{equation}
\end{definition}
For \(\cX\subseteq2^{[N]}\) and a probability measure \(\nu\) on
\(2^{[N]}\), we say that \(\cX\) \emph{carries} \(\nu\) if \(\nu(\cX)=1\).
Observe that if $\cX$ carries a $p$-spread measure $\nu$ and $\cX\subseteq\bigcup_{I\in\mathcal I}H_I$, then
\[
    1\le\sum_{I\in\mathcal I}\nu(H_I)
    \le\sum_{I\in\mathcal I}p^{|I|}.
\]
Consequently, a $p$-small family cannot carry a $p$-spread measure.

We phrase Talagrand's relaxation of the discrete convexity problem in the language of spread measures:
\begin{problem}[Fractional discrete convexity]\label{prob:frac-discrete-convexity}
Does there exist a positive integer $m$ such that, for every $N\ge1$,
$p\in(0,1)$ and $\cA\subseteq2^{[N]}$ with $\mu_p(\cA)\ge3/4$, the family
$\Bad_m(\cA)$ does not carry a $p$-spread probability measure?
\end{problem}
\begin{remark}
    As noted in \cite[Section 6]{Talagrand2010}, a set class can be arbitrarily small in $\mu_p$ while still carrying a $p$-spread measure.
    Therefore, Problem~\ref{prob:discrete-convexity} and Problem~\ref{prob:frac-discrete-convexity} cannot be solved positively by taking $m=1$.
\end{remark}

Our main result answers Problem~\ref{prob:frac-discrete-convexity} positively in a strong form: $m=2$ suffices (thus also answering positively \cite[Problem~3.8~and~Problem~3.9]{Talagrand2026}); moreover, our theorem provides a quantitative estimate of $\mu_p(\cA)$ when $\Bad_2(\cA)$ carries a $p$-spread measure.
\begin{theorem}\label{thm:quantitative}
Let $N\ge1$, $p\in(0,1)$, and $\cA\subseteq2^{[N]}$.  
If $\Bad_2(\cA)$ carries a $p$-spread probability measure, then
\[
  \mu_p(\cA)\le \frac{1-p}{2-p}.
\]
\end{theorem}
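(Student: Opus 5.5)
Throughout, write $a=\mu_p(\cA)$ and let $\nu$ be a $p$-spread probability measure with $\nu(\Bad_2(\cA))=1$. The target inequality $a\le\frac{1-p}{2-p}$ is equivalent to
\[
  \mu_p(\cA)\le(1-p)\,\mu_p(\cD),\qquad \cD:=2^{[N]}\setminus\cA ,
\]
and this is the form I will aim for.

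\emph{Reduction to an up-set.} First I replace $\cA$ by its up-closure. This does not decrease $\mu_p(\cA)$. It also leaves $\Cov_2(\cA)$ unchanged, since $X\subseteq A_1\cup A_2$ persists when each $A_j$ is enlarged. So I may assume $\cA$ is an up-set and $\cD$ a down-set. The key observation is then
\[
  X\subseteq A_1\cup A_2 \iff X\setminus A_1\subseteq A_2 \iff X\setminus A_1\in\cA .
\]
Hence $X\in\Bad_2(\cA)$ means exactly that $X\setminus A\in\cD$ for every $A\in\cA$. Probabilistically, draw $S\sim\mu_p$ and $X\sim\nu$ independently. Then on the event $\{S\in\cA\}$ we have $X\setminus S\in\cD$ almost surely. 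This gives
\[
  a\le\PP\bigl(S\in\cA,\ X\setminus S\in\cD\bigr)\le \PP(X\setminus S\in\cD).
\]
This alone is useless, since $X\setminus S$ is typically small. The real content must compare the joint event with $\mu_p(\cD)$, with a loss factor of $(1-p)$.

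\emph{Induction on $N$.} I plan to prove a stronger, inductively stable inequality by induction on $N$, splitting on coordinate $N$. Write $\cA_0,\cA_1\subseteq2^{[N-1]}$ for the sections of $\cA$ (with $\cA_0\subseteq\cA_1$), and similarly $\cD_0\supseteq\cD_1$. Split $\nu$ according to whether $N\in X$:
\[
  \nu=\nu^{(0)}+\nu^{(1)},\qquad \nu^{(1)}(2^{[N]})=\nu(H_{\{N\}})\le p .
\]
Spreadness of $\nu$ passes to $\nu^{(0)}$ and $\nu^{(1)}$ as sub-probability measures with the same bounds. For $\nu^{(1)}$, the bound $p^{|I|+1}$ on its $I$-sections supplies the extra factor $p$. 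The support condition also splits:
\begin{itemize}
\item[] if $N\notin X$, then $X\setminus A\in\cD_0$ for $A\in\cA_0\cup\cA_1$;
\item[] if $N\in X$, then $X'\setminus A\in\cD_1$ for $A\in\cA_1$, and $(X'\setminus A)\cup\{N\}\in\cD$ for $A\in\cA_0$.
\end{itemize}
So I would formulate the inductive statement for a \emph{pair} $(\cA,\cD)$, with $\cA$ an up-set and $\cD$ a down-set not necessarily complementary. The statement would read: if a sub-probability measure of mass $t$, spread with respect to $p$, is carried by $\{X:X\setminus A\in\cD\ \forall A\in\cA\}$, then $t\,\mu_p(\cA)\le(1-p)\mu_p(\cD)$, or a suitable linear or bilinear variant of this. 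I would then check that the base case $N=0$ and the one-coordinate step close, using $\mu_p(\cA)=p\mu_p(\cA_1)+(1-p)\mu_p(\cA_0)$ and $\nu^{(1)}$-mass at most $p$.

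\emph{Main obstacle.} The hard part is finding the right inductive hypothesis, namely a functional $\Phi(t,\mu_p(\cA),\mu_p(\cD))\le0$ that
\begin{itemize}
\item[] is preserved under the one-coordinate recursion above, and
\item[] specializes to $a\le(1-p)(1-a)$ when $t=1$ and $\cD=\cA^c$.
\end{itemize}
Two candidates need to be tried. The first is the bilinear form $t\,\mu_p(\cA)\le(1-p)\mu_p(\cD)$. The second is something like $\mu_p(\cA)\le(1-p)\mu_p(\cD)+(1-t)$. I would work out the two-point ($N=1$) extremal cases to pin down which one is tight; the constant $\frac{1-p}{2-p}$ strongly suggests that an $N=1$ example is extremal. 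The convexity or concavity check needed to close the induction step is where I expect most of the work to lie.
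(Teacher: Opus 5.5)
Your reduction runs in the wrong direction. The up-closure of a nonempty $\cA$ contains $[N]$, so $\Cov_2(\cA^{\uparrow})=2^{[N]}$ and $\Bad_2(\cA^{\uparrow})=\varnothing$. Enlarging the $A_j$ only shows $\Cov_2(\cA)\subseteq\Cov_2(\cA^{\uparrow})$. You want the down-closure, which preserves $\Cov_2(\cA)$ and does not decrease $\mu_p(\cA)$. Your equivalence ``$X\setminus A_1\subseteq A_2$ for some $A_2\in\cA$ iff $X\setminus A_1\in\cA$'' is true for down-sets and false for up-sets. There is also a slip in your case $N\in X$: for $A\in\cA_1$ the condition is $X'\setminus A\in\mathcal D_0$, not $\mathcal D_1$, since $N$ gets removed. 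These are fixable.

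The actual gap is that the inductive inequality, which carries the whole content, is never found, and both of your candidates are false in the generality the recursion forces. With the down-set orientation and $\cA=\{S:N\notin S\}$, the branch $N\in X$ yields the pair $(\cA_0,\mathcal D_1)=(2^{[N-1]},2^{[N-1]})$. For the pair $\cA=\mathcal D=2^{[n]}$ with $\nu=\mu_p$ and $t=1$, the support condition is vacuous, yet both $t\,\mu_p(\cA)\le(1-p)\mu_p(\mathcal D)$ and $\mu_p(\cA)\le(1-p)\mu_p(\mathcal D)+(1-t)$ read $1\le 1-p$.

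More fundamentally, no functional $\Phi(t,\mu_p(\cA),\mu_p(\mathcal D))$ can close the induction as you set it up. Recording only that $\nu^{(0)}$ and $\nu^{(1)}/p$ are separately $p$-spread (plus $t_1\le p$) discards the coupling $\nu^{(0)}(H_I)+\nu^{(1)}(H_I)\le p^{|I|}$. Consider the larger class of sub-probability measures on $2^{[n]}$ satisfying only $\nu(\{X:X\cap\{j,\dots,n\}=J\})\le p^{|J|}$ for $1\le j\le n+1$ and $J\subseteq\{j,\dots,n\}$. Your base case and step would go through verbatim for this class, since it is closed under splitting on coordinate $n$ and still gives $t_1\le p$. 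Now take the law of $\{1\}\cup R$, with $R\sim\mu_{p/(1+p)}$ on $\{2,\dots,N\}$. It lies in this class once $(1+p)^{N-1}\ge 1/p$. It is carried by $\Bad_2(\cA)=H_{\{1\}}$ for $\cA=\{S:1\notin S\}$, where $\mu_p(\cA)=1-p>\frac{1-p}{2-p}$. So your scheme would prove a false statement.

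The missing idea is a way to use all the constraints $\nu(H_I)\le p^{|I|}$ simultaneously. The paper does this spectrally. It averages over $S\sim\nu$ the product kernels $\prod_{i\in S}h(x_i,y_i)$, where $h(x,y)=1+\lambda\varphi(x)\varphi(y)$ and $\lambda=-(1-p)/p$ is chosen so that $h(0,0)=0$. The quadratic form of $\ind_\cA$ then vanishes, because each $S\in\Bad_2(\cA)$ has a coordinate outside $x\cup y$ for $x,y\in\cA$. The eigenvalue on $\chi_I$ is $\lambda^{|I|}\nu(H_I)\ge-(1-p)$, so $0\ge a^2-(1-p)a(1-a)$, which gives the bound.
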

\begin{remark}
    In the parallel story of the convexity problem in Gaussian space, $m=2$ (two-fold Minkowski sum) does not work \cite[Proposition~2.6]{Talagrand1995}.
\end{remark}

Theorem~\ref{thm:quantitative} is proved in Section~\ref{sec:proof}.
The proof borrows techniques from the study of intersecting families.
In particular, we adopt the framework of Friedgut \cite{Friedgut2008} and Ellis, Filmus, and Friedgut \cite{EFF2012}, which employs spectral analysis to solve problems concerning the maximum size of \emph{$t$-intersecting} families and \emph{triangle-intersecting} families, respectively.
We illustrate their relevance to Problem~\ref{prob:discrete-convexity} and Problem~\ref{prob:frac-discrete-convexity} by considering a particular example from the latter result.

Suppose $N=\binom{n}{2}$ for a positive integer $n$ and think of $2^{[N]}$ as the set of subgraphs of the complete graph $K_n$ on $n$ vertices.
The following conjecture of Simonovits and S\'os was settled in \cite{EFF2012}.
\begin{definition}
    A family $\cF \subseteq 2^{[N]}$ is \emph{triangle intersecting} if, for every $G,H\in\cF$, $G\cap H$ contains a triangle.
\end{definition}
\begin{theorem}[Simonovits-S\'os conjecture]\label{thm:simonovits-sos}
    For any triangle-intersecting family $\cF$, $\mu_{1/2}(\cF)\le\frac18$.
\end{theorem}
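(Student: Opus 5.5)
The approach is the spectral (Hoffman-type) method of \cite{EFF2012}, in the Fourier basis of $\{0,1\}^N$ under $\mu_{1/2}$. For $S\subseteq[N]$ let $\chi_S(G)=(-1)^{|S\cap G|}$. Everything rests on one observation. If $G,H\in\cF$, then $G\cap H$ contains a triangle, so $G\cap H$ is not bipartite. In particular, $G\cap H$ is not contained in any complete bipartite graph $B$ on $[n]$, that is, in any cut $B=E(U,[n]\setminus U)$.

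\textbf{Step 1: one operator per cut.} For a fixed cut $B$, define the Markov operator $T_B$ as follows. From $G$, move to $H$ by setting $H=[N]\setminus G$ on the edges outside $B$ and choosing $H$ uniformly at random on the edges of $B$. Then $G\cap H\subseteq B$ always holds, so $\langle \ind_\cF,T_B\ind_\cF\rangle=0$. The map $T_B$ preserves $\mu_{1/2}$ and is diagonal in the characters:
\[
T_B\chi_S=(-1)^{|S|}\,\ind[S\cap B=\emptyset]\,\chi_S .
\]

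\textbf{Step 2: averaging and the Hoffman bound.} Take a convex combination $T=\sum_B c_BT_B$ over some distribution on cuts; one may also allow more general ``forbidden'' graphs $B$ that are triangle-free, or mixtures with the identity on certain levels. Then $T$ has eigenvalues
\[
\lambda_S=(-1)^{|S|}\,\PP_B[S\cap B=\emptyset],\qquad \lambda_\emptyset=1,
\]
and $\langle\ind_\cF,T\ind_\cF\rangle=0$. Write $\lambda_{\min}=\min_{S\ne\emptyset}\lambda_S$ and $\mu=\mu_{1/2}(\cF)$. Expanding $\ind_\cF$ in characters gives
\[
0\ \ge\ \mu^2+\lambda_{\min}(\mu-\mu^2),
\]
hence $\mu\le -\lambda_{\min}/(1-\lambda_{\min})$. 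The target $1/8$ therefore amounts to $\lambda_{\min}\ge -1/7$.

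\textbf{Step 3: choosing the distribution on $B$ (the main obstacle).} We need to choose the distribution on $B$ so that $\PP[S\cap B=\emptyset]\le 1/7$ for every $S$ of odd size. For a single edge under a uniformly random cut, $\PP[e\notin B]=1/2$, which is far too large. So plain random cuts cannot work, and the weights must be tuned. The first thing I would try is a family of random bipartite graphs obtained by biased random partitions, together with a tunable symmetric weighting. I would then compute $\PP[S\cap B=\emptyset]$ in terms of the graph structure of $S$ (number of edges, components, triangles). The extremal example, all graphs containing a fixed triangle, has measure $1/8$ and Fourier support on subsets of that triangle. This suggests the balance must make the odd subsets of a triangle, namely single edges and the whole triangle, hit exactly $-1/7$.

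It may be impossible to make all odd-$S$ eigenvalues small with nonnegative weights. In that case I would follow \cite{EFF2012} and allow signed or level-dependent corrections. Concretely, one can add to $T$ any operator $M$ with $\langle\ind_\cF,M\ind_\cF\rangle\le0$ for triangle-intersecting families, for instance multiples of projections onto low levels, using $\cF$ being intersecting to control $\hat{\ind}_\cF$ on levels $1$ and $2$. With such corrections it suffices to have $\lambda_S\ge-1/7$ only outside a small set of low-complexity $S$.

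Verifying the eigenvalue inequality for every $S\subseteq E(K_n)$, uniformly in $n$, is the genuinely hard combinatorial estimate. I expect it to require a case analysis on small $S$ (at most three edges, triangles, paths, stars) together with a crude bound for large $S$, where $\PP[S\cap B=\emptyset]$ decays exponentially in the number of edges of $S$.
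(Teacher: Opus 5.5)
Your Steps 1 and 2 are correct, and they match the setup the paper recaps before citing \cite{EFF2012}; the paper gives no proof of its own. In detail, $T_B$ is the $p=1/2$ instance of the kernel $k_S$ with $S=\overline{B}$. Its quadratic form vanishes on $\ind_\cF$, $T_B\chi_S=(-1)^{|S|}\ind[S\cap B=\varnothing]\chi_S$, and the Hoffman computation correctly reduces the bound $1/8$ to $\lambda_{\min}\ge-1/7$.

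The genuine gap is Step 3, which is the whole theorem: you supply neither the operator nor a proof about its spectrum. In \cite{EFF2012} this step is the construction of suitable linear combinations of bipartite-graph operators (Sections 2.4 and 4.2 there), together with a proof that every nontrivial eigenvalue is at least $-1/7$.

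Your primary plan provably fails. Apply the vanishing identity to the triangle junta $f=\ind[T\subseteq G]$. It is triangle-intersecting and has $\hat f(S)=(-1)^{|S|}/8$ for every $S\subseteq T$. Then $0=\langle f,Tf\rangle=\frac1{64}\sum_{S\subseteq T}\lambda_S$, so $\sum_{\varnothing\ne S\subseteq T}\lambda_S=-1$. Now take any nonnegative mixture of your $T_B$, whether over cuts, biased cuts, or arbitrary triangle-free $B$. The three two-edge subsets of $T$ have $\lambda_S=\PP[S\cap B=\varnothing]\ge0$. Hence one of the four odd subsets has $\lambda_S\le-1/4$, and the Hoffman bound can be no better than $1/5$. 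Mantel's theorem makes this worse: $|B|\le n^2/4$ forces some edge to have $\PP[e\notin B]\ge\frac{n-2}{2(n-1)}$. The bound is then at best $\frac{n-2}{3n-4}\to\frac13$, which is weaker than the $1/4$ of \cite{CGFS1986}.

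The same computation shows what is actually required. Equality at $1/8$ forces $\lambda_S=-1/7$ on all seven nonempty subsets of a triangle. This includes the two-edge paths, not only the single edges and the triangle as you suggest. So negative eigenvalues on even-size sets are mandatory, and for your operators this means negative coefficients $c_B$. The signed combination is therefore the heart of the proof, not a fallback. Even with it, you would still have to establish $\lambda_S\ge-1/7$ for every $S\subseteq E(K_n)$, uniformly in $n$, which you only anticipate.

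The proposed level corrections do not help as stated. Let $P_k$ denote projection onto the degree-$k$ characters. Adding $cP_k$ with $c>0$ raises those eigenvalues, but it makes $\langle\ind_\cF,(T+cP_k)\ind_\cF\rangle=c\lVert P_k\ind_\cF\rVert^2$, which is positive in general. You have no a priori bound on the level-$k$ weight of $\ind_\cF$ to absorb it, and subtracting $cP_k$ only lowers eigenvalues. As it stands, the proposal is the Hoffman framework with the decisive construction missing.
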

In other words, under the uniform measure, any triangle-intersecting family cannot be larger than the principal up-class generated by a particular triangle.

To motivate our proof of Theorem~\ref{thm:quantitative}, we recap a couple of key steps shared with the proof of Theorem~\ref{thm:simonovits-sos}, following the narrative in \cite[Section~1.3]{EFF2012}.
For a set $S\subseteq[N]$, we use $S^c=[N]\setminus S$ to denote the complement of $S$ and $S\mathbin{\triangle} S'$ to denote the symmetric difference of $S,S'\subseteq[N]$.

The starting point of the proof of Theorem~\ref{thm:simonovits-sos}, as stated in \cite[Section~1.2]{EFF2012}, is the following observation, also used by Chung, Graham, Frankl, and Shearer to obtain a bound of $\mu_{1/2}(\cF)\le1/4$ in \cite[Theorem~9]{CGFS1986}, despite using an entropy method rather than a spectral method:
If $B$ is a bipartite graph and $F_1, F_2\in\cF$, then $B^c$ must intersect $F_1\cap F_2$ since $B$ cannot contain a triangle.
Concretely put,
\[
    G\in\cF\ \Longrightarrow\ G\mathbin{\triangle}B^c\notin\cF.
\]
Therefore, bipartite graphs encode information about triangle-intersecting families: any bipartite graph $B$ represents a mechanism that converts a graph in $\cF$ to a graph outside of $\cF$ by flipping every edge in $B^c$.

We now apply this observation to our setting. 
$\Bad_2(\cA)$ encodes information about $\cA$ in a similar manner:
For every $S\in\Bad_2(\cA)$,
\[
    S\cap X^c\cap Y^c\ne\varnothing
    \quad\hbox{for every }X,Y\in\cA.
\]
Consequently, for $S\in\Bad_2(\cA)$ and $X\in\cA$, flipping every coordinate of $X$ in $S$ removes it from $\cA$.
Lifting the above construction to the level of operators, we define 
\[
\mathsf T_S f(x)=f(x+S),
\]
where we use bitwise addition (XOR operation) on $\{0,1\}^N$.
An advantage of $\mathsf T_S$ is that whenever $f$ is supported in $\cA$, $\mathsf T_Sf(x) = 0$ if $x\in\cA$.
Therefore, it defines a vanishing quadratic form on functions supported on $\cA$.
This is the building block of the operator constructed in \cite[Section~2.1]{EFF2012} to prove Theorem~\ref{thm:simonovits-sos}.
The operator we eventually use in our proof of Theorem~\ref{thm:quantitative} closely resembles the construction in \cite[Section~4.1]{EFF2012}, an extension of $\mathsf T_S$ tailored to arbitrary Bernoulli-$p$ measures.

Once we have an operator whose quadratic form vanishes on functions supported on $\cA$ and whose spectrum has a suitable lower bound, applying the spectral lower bound to the indicator of $\cA$ gives an upper bound on $\mu_p(\cA)$.
This spectral argument is reminiscent of the Hoffman bound, which controls the independence number of a regular graph by the smallest eigenvalue of its adjacency matrix.
We refer the reader to \cite{Haemers2021} for an account of this method.
For other examples in which spectral analysis unveils combinatorial information, see \cite{Lovasz1979}, \cite{Schrijver1981}, and \cite{Wilson1984}.

One potential difficulty of the spectral approach is obtaining good spectral information on the constructed operators.
However, any linear combination of $\mathsf T_S$ for $S\in\Bad_2(\cA)$ preserves the required vanishing property.
To solve Problem~\ref{prob:frac-discrete-convexity}, it suffices to average a modification of $\mathsf T_S$ over the provided spread measure since the spread inequalities provide the desired lower bound on the spectrum.
For more sophisticated linear combinations of the operators indexed by bipartite graphs, see \cite[Section~2.4~and~Section~4.2]{EFF2012}.

Ultimately, we hope this paper sheds some light on the mysterious nature of $p$-spread measures, a sentiment shared by Talagrand in \cite[Section~6]{Talagrand2010} and \cite[Section~3]{Talagrand2026}.
In Section~\ref{sec:comparison}, we demonstrate two comparison theorems between $p$-spread and product Bernoulli-$p$ measures: a functional inequality and a coupling result, both of which may be of independent interest.
These theorems were motivated by finding an equivalent formulation of Problem~\ref{prob:frac-discrete-convexity} analogous to the reformulation of the Gaussian convexity problem in \cite{Song2026}.
Indeed, our proofs share similar ingredients with \cite{Song2026} and \cite{HST}.

\section*{Use of Artificial Intelligence}
The author engaged in many conversations with GPT-5.6 to brainstorm proof strategies for Problem~\ref{prob:frac-discrete-convexity}.
GPT-5.6 Sol suggested a proof of Theorem~\ref{thm:quantitative}, calling it a ``Hoffman-type bound''.
This idea resembles that in \cite{Friedgut2008} and \cite{EFF2012}.
The author verified the proof and developed its presentation.

\section{Proof of Theorem~\ref{thm:quantitative}}
\label{sec:proof}
We work in the space of functions on $\{0,1\}^N$, with inner product $\langle f,g\rangle_p=\EE_{\mu_p}[fg]$ and (squared) norm \(\lVert f\rVert_p^2=\langle f,f\rangle_p\).
We use \(\mathbf1\) to denote the constant-one function and \(\ind_\mathcal{E}\) to denote the indicator function of \(\mathcal{E}\).

We use the standard \emph{$p$-skewed Fourier–Walsh basis}.
In the single-coordinate case ($N=1$), define
\[
    \varphi(x)=\frac{x-p}{\sqrt{p(1-p)}}
    \qquad (x\in\{0,1\}).
\]
Then $\EE_{\operatorname{Bernoulli}(p)}\varphi=0$ and $\EE_{\operatorname{Bernoulli}(p)}\varphi^2=1$.
Therefore, $\{\one,\varphi\}$ is an orthonormal basis on $\{0,1\}$.
Consequently, taking products, the functions
\[
    \chi_I(x)=\prod_{i\in I}\varphi(x_i)
    \qquad (I\subseteq[N])
\]
form an orthonormal basis on $\{0,1\}^N$.

The proof constructs self-adjoint kernel operators whose quadratic forms vanish on functions supported on $\cA$.
If $\Bad_2(\cA)$ carries a $p$-spread measure, the weighted average has a spectral lower bound.
Applying these facts to $\ind_\cA$ forces $\mu_p(\cA)$ to be small.

We first introduce the operator with the desired properties.
We start from the single-coordinate case, reasoning as in \cite[Section~2.2]{Friedgut2008}.
We try to find the corresponding kernel function $h(x,y)$, where $x,y\in\{0,1\}$.
The relevant scenario is when the coordinate we are looking at is one in $S\in\Bad_2(\cA)$ witnessing the non-coverability of two sets in $\cA$.
In this case, $x=y=0$.
Since applying the Hoffman bound requires a vanishing condition, we must look for kernels such that $h(0,0)=0$.
We also seek an operator that leaves $\one$ invariant, since this eigenfunction produces the $\mu_p(\cA)$ term in the final inequality.
This leads us to define
\[
    h(x,y)=1+\lambda\varphi(x)\varphi(y),
\]
with 
\[
    \lambda=-\frac{1-p}{p}.
\] 
Explicitly,
\[
    \bigl(h(x,y)\bigr)_{x,y\in\{0,1\}}
    =\begin{pmatrix}
        0 & \dfrac{1}{p}\\
        \dfrac{1}{p} & \dfrac{2p-1}{p^2}
    \end{pmatrix}.
\]
\begin{remark}
    Up to normalization, $h$ is uniquely determined by the two key ingredients of a Hoffman-type bound, namely vanishing at $(0,0)$ and diagonalizability in $\{\one,\varphi\}$.
\end{remark}
\begin{remark}
    When $p=1/2$, $h$ corresponds to the coordinate flipping example in Section~\ref{sec:intro}.
\end{remark}
Let $\mathsf H$ be the resulting operator on the one-coordinate space:
\[
    (\mathsf Hg)(x)=\EE_{Y\sim\operatorname{Bernoulli}(p)}[h(x,Y)g(Y)].
\]
Then $\mathsf H$ is self-adjoint, and
\[
    \mathsf H\one=\one,
    \qquad
    \mathsf H\varphi=\lambda\varphi.
\]
Thus, its eigenvalues in the orthonormal basis $\{\one,\varphi\}$ are $1$ and $\lambda$.
For $S\subseteq[N]$, define the product kernel:
\begin{equation}\label{eq:kS}
    k_S(x,y) =\prod_{i\in S}h(x_i,y_i).
\end{equation}
This construction is similar to that in \cite[Section~4.1]{EFF2012}, except that coordinates outside $S$ use projection onto constant functions instead of the identity operator.
Given a probability measure $\nu$ on $2^{[N]}$, define the corresponding self-adjoint operator $\mathsf K_\nu$ by
\[
    (\mathsf K_\nu f)(x)=\EE_{S\sim\nu}\EE_{Y\sim\mu_p}[k_S(x,Y)f(Y)].
\]
When $\nu$ is carried by $\Bad_2(\cA)$, $\mathsf K_\nu$ satisfies the desired vanishing condition.
\begin{lemma}\label{lem:vanishing}
$k_S(x,y)=0$ for every $x,y\in\cA$ and $S\in\Bad_2(\cA)$.
Consequently, whenever $\nu$ is carried by $\Bad_2(\cA)$,
\begin{equation}\label{eq:zero-form}
    \langle \ind_\cA,\mathsf K_\nu \ind_\cA\rangle_p=0.
\end{equation}
\end{lemma}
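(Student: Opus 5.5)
The plan is to prove the pointwise vanishing first and then average it to get the quadratic-form identity.

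For the pointwise claim, fix $x,y\in\cA$ (viewed as subsets $X,Y\subseteq[N]$ via the identification $2^{[N]}\cong\{0,1\}^N$) and $S\in\Bad_2(\cA)$. Since $S\notin\Cov_2(\cA)$, we have $S\not\subseteq X\cup Y$. Hence there is a coordinate $i\in S$ with $i\notin X$ and $i\notin Y$, that is, $x_i=y_i=0$. The one-coordinate kernel satisfies $h(0,0)=0$ by the explicit matrix. A direct check: $1+\lambda\varphi(0)^2=1-\frac{1-p}{p}\cdot\frac{p^2}{p(1-p)}=0$. Since $k_S(x,y)=\prod_{j\in S}h(x_j,y_j)$ contains the factor $h(x_i,y_i)=0$, we get $k_S(x,y)=0$.

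For the consequence, expand the inner product as a finite sum:
\[
\langle \ind_\cA,\mathsf K_\nu\ind_\cA\rangle_p=\EE_{X\sim\mu_p}\EE_{S\sim\nu}\EE_{Y\sim\mu_p}\bigl[\ind_\cA(X)\,k_S(X,Y)\,\ind_\cA(Y)\bigr].
\]
All spaces are finite, so the expectations can be interchanged freely. Because $\nu(\Bad_2(\cA))=1$, we may restrict to $S\in\Bad_2(\cA)$. The $S$ outside this family have $\nu$-mass zero and contribute nothing, even though the kernel may be nonzero there. On that event, the integrand is nonzero only when $X,Y\in\cA$, and there it vanishes by the first part. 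Hence the whole expression is $0$, which is \eqref{eq:zero-form}.

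There is no real obstacle here. The only points needing care are the direction of the complement in the definition of $\Bad_2$ (it must produce a common zero coordinate inside $S$) and the verification $h(0,0)=0$ from the choice of $\lambda$.
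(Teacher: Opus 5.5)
Your proof is correct and follows the paper's argument: you pick a coordinate $i\in S\setminus(x\cup y)$ so that the factor $h(0,0)=0$ kills $k_S(x,y)$, then sum over $x,y\in\cA$ and $S\in\Bad_2(\cA)$. Your explicit check of $h(0,0)=0$ and your remark that $\nu$-null sets contribute nothing are details the paper leaves implicit.
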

\begin{proof}
For every $x,y\in\cA$ and $S\in\Bad_2(\cA)$, we have $S\nsubseteq x\cup y$.
Choose $i\in S\setminus(x\cup y)$.
Then $x_i=y_i=0$, so the $i$th factor in \eqref{eq:kS} is $h(0,0)=0$.
It follows that $k_S(x,y)=0$.
Summing over $x,y\in\cA$ and $S\in\Bad_2(\cA)$ gives \eqref{eq:zero-form}.
\end{proof}

We next diagonalize $\mathsf K_\nu$ and obtain a lower bound on its spectrum when $\nu$ is $p$-spread.
\begin{lemma}\label{lem:lower-spectrum}
For every $I\subseteq[N]$, the function $\chi_I$ is an eigenfunction of $\mathsf K_\nu$ with eigenvalue
\begin{equation}\label{eq:thetaI}
    \theta_I=\lambda^{|I|}\nu(H_I).
\end{equation}
In addition, if $\nu$ is $p$-spread,
\[
    \theta_I \ge -(1-p).
\]
\end{lemma}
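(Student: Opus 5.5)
We will first compute the action of a single product kernel $k_S$ on the basis $\{\chi_I\}$, and then average over $S\sim\nu$. Because both $k_S$ and $\chi_I$ factor over coordinates and $\mu_p$ is a product measure, the quantity
\[
\EE_{Y\sim\mu_p}[k_S(x,Y)\chi_I(Y)]
\]
splits into a product of one-coordinate expectations.

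There are three kinds of coordinates to handle.
\begin{itemize}
\item For $i\in S\cap I$, the factor is $(\mathsf H\varphi)(x_i)=\lambda\varphi(x_i)$.
\item For $i\in S\setminus I$, the factor is $(\mathsf H\one)(x_i)=1$.
\item For $i\notin S$, the kernel carries no factor. So if $i\in I\setminus S$, the factor is $\EE_{\operatorname{Bernoulli}(p)}\varphi=0$, and if $i\notin S\cup I$ it is $1$.
\end{itemize}
Putting these together gives
\[
\mathsf K_{\delta_S}\chi_I=\lambda^{|I|}\ind[I\subseteq S]\,\chi_I .
\]
Averaging over $S\sim\nu$ then gives $\mathsf K_\nu\chi_I=\lambda^{|I|}\nu(\{S:I\subseteq S\})\chi_I=\lambda^{|I|}\nu(H_I)\chi_I$, which is \eqref{eq:thetaI}. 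The averaging step only requires interchanging a finite expectation with the linear operator, so it is immediate.

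For the lower bound, split by the parity of $|I|$.
\begin{itemize}
\item If $|I|$ is even, then $\lambda^{|I|}\ge0$ and $\nu(H_I)\ge0$, so $\theta_I\ge0\ge-(1-p)$.
\item If $|I|$ is odd, then $|I|\ge1$ and $\theta_I=-\bigl(\tfrac{1-p}{p}\bigr)^{|I|}\nu(H_I)$. The $p$-spread condition \eqref{eq:spread} gives $\nu(H_I)\le p^{|I|}$, hence $\theta_I\ge-(1-p)^{|I|}\ge-(1-p)$, using $0<1-p<1$ and $|I|\ge1$.
\end{itemize}

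The only point needing care is the treatment of coordinates outside $S$. There the kernel acts as projection onto constants, so it annihilates $\varphi$. This is exactly what produces the indicator $\ind[I\subseteq S]$, and hence the principal up-class $H_I$, in the eigenvalue. Everything else is a routine one-coordinate verification of $\mathsf H\one=\one$ and $\mathsf H\varphi=\lambda\varphi$, which was already recorded above.
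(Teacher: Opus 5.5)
Your proposal is correct and follows essentially the same route as the paper. Both factor $\EE_{Y\sim\mu_p}[k_S(x,Y)\chi_I(Y)]$ coordinatewise, using $\mathsf H$ on $S$ and projection onto constants off $S$, to get $\lambda^{|I|}\ind_{\{I\subseteq S\}}\chi_I$. Both then average over $S\sim\nu$ and finish with the same parity split and the spread bound $\nu(H_I)\le p^{|I|}$.
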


\begin{proof}
For any function of the form $f(y)=\prod_{i=1}^N f_i(y_i)$,
\begin{align*}
    &\EE_{Y\sim\mu_p}\bigl[k_S(x,Y)f(Y)\bigr]\\
    &\quad=\prod_{i\in S}\EE_{Y_i\sim\operatorname{Bernoulli}(p)}
    \bigl[h(x_i,Y_i)f_i(Y_i)\bigr]
    \prod_{i\notin S}\EE_{Y_i\sim\operatorname{Bernoulli}(p)}
    \bigl[f_i(Y_i)\bigr].
\end{align*}
In other words, for fixed $S$, the operator with kernel $k_S$ is the tensor product of $\mathsf H$ on the coordinates in $S$ and the projection onto the constants on the coordinates outside $S$.
Taking $f_i=\varphi$ on $I$ and $f_i=\one$ outside $I$ gives
\[
 \EE_{Y\sim\mu_p}\bigl[k_S(x,Y)\chi_I(Y)\bigr]
 =\lambda^{|I|}\ind_{\{I\subseteq S\}}\chi_I(x).
\]
Therefore, its eigenvalue on $\chi_I$ is
\[
    \lambda^{|I|}\ind_{\{I\subseteq S\}}.
\]
Averaging over $S\sim\nu$ gives \eqref{eq:thetaI}.

If $|I|$ is even, $\theta_I\ge0$.
If $|I|$ is odd, \eqref{eq:spread} gives
\[
    \theta_I
    =-\left(\frac{1-p}{p}\right)^{|I|}\nu(H_I)
    \ge -\left(\frac{1-p}{p}\right)^{|I|}p^{|I|}
    =-(1-p)^{|I|}
    \ge -(1-p).
\]
\end{proof}

We now combine the two lemmas to complete the Hoffman-type bound.
\begin{proof}[Proof of Theorem~\ref{thm:quantitative}]
Let $\nu$ be a $p$-spread measure carried by
$\Bad_2(\cA)$, and let
\[
    f=\ind_\cA.
\]
We expand $f$ on the skewed Fourier-Walsh basis $\{\chi_I\}_{I\subseteq [N]}$, writing $f=a\one+g$, where 
\[
g = \sum_{\substack{I\subseteq[N]\\I \neq \varnothing}}\langle f,\chi_I\rangle_p\chi_I
\]
is the mean-zero component of $f$.
Therefore,
\[
    a=\langle f,\one\rangle_p=\mu_p(\cA),\qquad
    \lVert g\rVert_p^2=\operatorname{Var}_{\mu_p}(f)=a(1-a).
\]
According to Lemma~\ref{lem:vanishing} and Lemma~\ref{lem:lower-spectrum},
\[
    0
    =\langle f,\mathsf K_\nu f\rangle_p
    =a^2+\langle g,\mathsf K_\nu g\rangle_p
    \ge a^2-(1-p)\lVert g\rVert_p^2 
    =a\bigl((2-p)a-(1-p)\bigr).
\]
This implies
\[
    a\le\frac{1-p}{2-p}.
\]
\end{proof}

\section{\texorpdfstring{$p$}{p}-spread comparison theorems}
\label{sec:comparison}
Because $(1-p)/(2-p)<1/2$ for every $p\in(0,1)$, Theorem~\ref{thm:quantitative} answers Problem~\ref{prob:frac-discrete-convexity} affirmatively with $m=2$:
\begin{corollary}[Fractional discrete convexity conjecture]\label{cor:frac}
For every $N\ge1$, $p\in(0,1)$, and $\cA\subseteq2^{[N]}$ satisfying $\mu_p(\cA)>1/2$, the family $\Bad_2(\cA)$ carries no $p$-spread measure.
\end{corollary}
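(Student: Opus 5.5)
The corollary follows directly from Theorem~\ref{thm:quantitative} by contraposition. Suppose, for contradiction, that $\mu_p(\cA)>1/2$ and that $\Bad_2(\cA)$ carries some $p$-spread probability measure $\nu$. Theorem~\ref{thm:quantitative} then applies to $\cA$ and gives
\[
    \mu_p(\cA)\le\frac{1-p}{2-p}.
\]

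It remains to check that this bound is strictly below $1/2$ for every $p\in(0,1)$. Clearing the positive denominator $2(2-p)$, the inequality $\frac{1-p}{2-p}<\frac12$ is equivalent to $2-2p<2-p$, that is, to $p>0$, which holds by hypothesis. Hence $\mu_p(\cA)<1/2$, contradicting $\mu_p(\cA)>1/2$. So $\Bad_2(\cA)$ carries no $p$-spread measure.

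There is no real obstacle; the only care needed is that the strict inequality $(1-p)/(2-p)<1/2$ uses $p>0$. At $p=0$ the bound degenerates to $1/2$, and that case is excluded by the standing assumption $p\in(0,1)$. In particular, the hypothesis $\mu_p(\cA)\ge 3/4$ of Problem~\ref{prob:frac-discrete-convexity} is covered, so that problem is answered affirmatively with $m=2$.
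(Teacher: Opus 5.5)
Your proof is correct and is the paper's own argument: the corollary follows from Theorem~\ref{thm:quantitative} because $(1-p)/(2-p)<1/2$ for every $p\in(0,1)$. Since the hypothesis $\mu_p(\cA)>1/2$ is already strict, the non-strict bound $(1-p)/(2-p)\le 1/2$ would also suffice, so the care you take about $p>0$ is harmless but not actually needed.
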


Let $p\in(0,1)$ and $\nu$ be a $p$-spread measure on $2^{[N]}$.
In this section, we prove two comparison theorems for $p$-spread measures as consequences of Corollary~\ref{cor:frac}.

The first is a functional inequality.
For $f\colon 2^{[N]}\to\mathbb R$, define
\[
    Q_2 f(x)=\min_{\substack{x_1,x_2\subseteq[N]\\x\subseteq x_1\cup x_2}}\bigl(f(x_1)+f(x_2)\bigr).
\]

\begin{theorem}\label{thm:fi}
For every $f\colon2^{[N]}\to\mathbb R$,
\begin{equation}\label{eq:fi}
    \EE_\nu Q_2f\le 2\EE_{\mu_p} f.
\end{equation}
\end{theorem}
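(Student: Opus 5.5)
The plan is to deduce \eqref{eq:fi} from a coupling. Specifically, I would show that there exist random sets $(X,X_1,X_2)$ with $X\sim\nu$, $X_1\sim\mu_p$, $X_2\sim\mu_p$ and $X\subseteq X_1\cup X_2$ almost surely. Given such a coupling, the definition of $Q_2$ yields $Q_2f(X)\le f(X_1)+f(X_2)$ pointwise. Taking expectations gives $\EE_\nu Q_2f\le 2\EE_{\mu_p}f$ for every $f$ at once. Conversely, \eqref{eq:fi} for all $f$ is the LP dual of the existence of such a coupling. The whole task is therefore to verify a feasibility (Hall/Strassen-type) condition, and Corollary~\ref{cor:frac} is the tool for that.

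\textbf{Step 1 (reduce to a duality statement).} I would write the coupling problem as a finite linear program. The variables are $\pi(x,x_1,x_2)\ge0$, supported on triples with $x\subseteq x_1\cup x_2$, with first marginal $\nu$ and second and third marginals $\mu_p$. By Farkas' lemma, infeasibility produces functions $u,f_1,f_2$ with $u(x)\le f_1(x_1)+f_2(x_2)$ whenever $x\subseteq x_1\cup x_2$ and $\EE_\nu u>\EE_{\mu_p}f_1+\EE_{\mu_p}f_2$. Symmetrizing $f=(f_1+f_2)/2$ is not immediate, because the constraint is not symmetric in the pair. I would instead work directly with the two-function version $Q f_1f_2(x)=\min_{x\subseteq x_1\cup x_2}(f_1(x_1)+f_2(x_2))$, which is harmless. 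Adding constants and scaling, I may assume $0\le f_1,f_2\le 1$.

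\textbf{Step 2 (quantitative version of the main theorem for sub-measures).} Corollary~\ref{cor:frac} only says $\nu(\Bad_2(\cA))<1$, which is too weak. I would rerun the proof of Theorem~\ref{thm:quantitative} with $\nu$ replaced by its restriction $\nu'=\nu|_{\Bad_2(\cA)}$, which has mass $\beta=\nu(\Bad_2(\cA))$ and still satisfies $\nu'(H_I)\le p^{|I|}$. Lemma~\ref{lem:vanishing} and Lemma~\ref{lem:lower-spectrum} go through verbatim, giving $0\ge\beta a^2-(1-p)a(1-a)$ with $a=\mu_p(\cA)$. Hence
\[
\nu(\Bad_2(\cA))\le\frac{(1-p)(1-\mu_p(\cA))}{\mu_p(\cA)}.
\]
The same argument with two families, using the kernel $k_S(x,y)$ with $x\in\cA$ and $y\in\cB$ and Cauchy--Schwarz on the mean-zero parts, should give the analogous bound for $\nu$ of the set of $x$ not covered by any $A\cup B$ with $A\in\cA$, $B\in\cB$. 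In that bound, $\mu_p(\cA)^2$ is replaced by $\mu_p(\cA)\mu_p(\cB)$ and $a(1-a)$ by $\sqrt{a(1-a)b(1-b)}$.

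\textbf{Step 3 (layer cake).} With $A_s=\{f_1\le s\}$ and $B_t=\{f_2\le t\}$, we have $Q f_1f_2(x)\le s+t$ whenever $x\in\Cov(A_s,B_t)$. I would bound $\EE_\nu Qf_1f_2$ by integrating the tail $\nu(Qf_1f_2>r)$ and controlling it by $\min_{s+t=r}\nu(\text{not covered by }A_s\cup B_t)$ through Step 2. The target is to compare the result with $\int_0^1(1-\mu_p(A_s))\,ds+\int_0^1(1-\mu_p(B_t))\,dt$. This step is the main obstacle. The single-threshold bound loses a factor, and the Step 2 estimate is only useful when $\mu_p(A_s)$ and $\mu_p(B_t)$ exceed roughly $1-p/2$. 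I would first try choosing thresholds at the $\mu_p$-quantiles of $f_1$ and $f_2$ and exploiting the slack $(1-p)/(2-p)<1/2$. If that fails, I would abandon the layer cake and prove the Hall condition for the coupling directly. That condition is $\nu(\mathcal U)\le\sup\{\ldots\}$ for every family $\mathcal U$, and I would verify it using the spectral bound for $\nu|_{\mathcal U}$, in the spirit of the Strassen-type arguments of \cite{Song2026} and \cite{HST}.
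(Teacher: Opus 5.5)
Your Step~2 is correct: for the restricted measure $\nu'$ one still has $|\theta_I|\le 1-p$ for every $I\neq\varnothing$, so both the one-family and two-family bounds hold. The proof breaks at Step~3, which you leave open and which cannot be closed with those estimates, because level-by-level bounds are too lossy.

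Take $\cA_0\subseteq\cA_1$ with $\mu_p(\cA_0)=a_0$ and $\mu_p(\cA_1)=a_1$, and let $f=0$ on $\cA_0$, $f=1$ on $\cA_1\setminus\cA_0$, and $f=2$ elsewhere. Then $Q_2f\le 2$, and exactly
\[
\EE_\nu Q_2f=\nu\bigl(\Bad_2(\cA_0)\bigr)+\nu\bigl(\{x:x\not\subseteq A\cup B\ \text{for all }A\in\cA_0,\,B\in\cA_1\}\bigr),
\qquad 2\EE_{\mu_p}f=4-2a_0-2a_1 .
\]
With $p=0.01$, $a_0=0.49$, $a_1=0.999$, your bounds give only $1$ for the first term. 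Indeed $a_0<(1-p)/(2-p)$, so even Theorem~\ref{thm:quantitative} permits $\nu(\Bad_2(\cA_0))=1$. They give about $0.032$ for the second term. The total exceeds $2\EE_{\mu_p}f=1.022$, so no bookkeeping of thresholds rescues the layer cake.

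The fallback does not help either. For this three-marginal problem there is no set-indexed Hall condition. As your own Step~1 shows, the Farkas dual of the coupling LP is exactly the functional inequality you are trying to prove, so routing through the coupling is circular. Incidentally, symmetrizing to $(f_1+f_2)/2$ is immediate, since $x\subseteq x_1\cup x_2$ is symmetric in $(x_1,x_2)$.

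The missing idea is that the qualitative Corollary~\ref{cor:frac}, which you dismiss as too weak, suffices after tensorization. This is the paper's proof.

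\begin{enumerate}
\item On $[N]\times[L]$ put $f_L(S)=\frac1L\sum_{\ell}f(S^{(\ell)})$. Since the covering constraint splits over blocks, $Q_2f_L(S)=\frac1L\sum_\ell Q_2f(S^{(\ell)})$.
\item Thin $\nu$ by keeping each element independently with probability $\rho$. The result $\nu_\rho$ is $\rho p$-spread.
\item By the law of large numbers, for large $L$ the family $\cA_L=\{f_L\le\EE_{\mu_p}f+\varepsilon\}$ has $\mu_p^{\otimes L}$-measure $>1/2$. Likewise $\cB_L=\{Q_2f_L\ge\EE_{\nu_\rho}Q_2f-\varepsilon\}$ has $\nu_\rho^{\otimes L}$-measure $\ge\rho$.
\item Hence $\nu_L=\nu_\rho^{\otimes L}(\cdot\mid\cB_L)$ is carried by $\cB_L$ and satisfies $\nu_L(H_I)\le(\rho p)^{|I|}/\rho\le p^{|I|}$ for $I\neq\varnothing$.
\item The Corollary then yields $B\in\cB_L$ and $A_1,A_2\in\cA_L$ with $B\subseteq A_1\cup A_2$. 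So $\EE_{\nu_\rho}Q_2f-\varepsilon\le Q_2f_L(B)\le f_L(A_1)+f_L(A_2)\le 2\EE_{\mu_p}f+2\varepsilon$. Let $\varepsilon\to0$, then $\rho\to1$.
\end{enumerate}

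Concentration is what turns the threshold-$1/2$ set statement into a statement about means, which your level-set route does not capture. The coupling (Theorem~\ref{thm:coup}) is then derived from the inequality by Farkas, not the other way around.
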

\begin{proof}
As in the proofs of \cite[Theorem~1.1]{Song2026} and \cite[Proposition~2.6]{Talagrand1995}, our proof relies on the law of large numbers.

We first create some slack in the spread parameter.
Sample $X\sim\nu$ and independently retain each element of $X$ with probability $\rho\in(0,1)$, where $\rho$ is the slack.
Let $\nu_\rho$ be the law of the resulting set.
For every $I\subseteq[N]$,
\begin{equation}\label{eq:thinning-spread}
    \nu_\rho(H_I)
    =\rho^{|I|}\nu(H_I)
    \le(\rho p)^{|I|}.
\end{equation}
In other words, $\nu_\rho$ is $(\rho p)$-spread.
By bounded convergence,
\[
    \EE_{\nu_\rho}Q_2f\longrightarrow \EE_\nu Q_2f
    \qquad\text{as }\rho\uparrow1.
\]

We now take tensor powers.  
On $[N]\times[L]$, write
\[
    S^{(\ell)}=\{i\in[N]:(i,\ell)\in S\}
\]
for the restriction of $S\subseteq[N]\times[L]$ to block $\ell$.
Define
\[
    f_L(S)=\frac{1}{L}\sum_{\ell=1}^L f(S^{(\ell)}).
\]
Note that because of the product structure,
\[
    Q_2f_L(S)=\frac{1}{L}\sum_{\ell=1}^L Q_2f(S^{(\ell)}).
\]
Fix $\varepsilon>0$.
Consider
\begin{align}
    \cA_L&=\{R:f_L(R)\le\EE_{\mu_p}f+\varepsilon\},\label{eq:tensor-family}\\
    \cB_L&=\{S:Q_2f_L(S)\ge\EE_{\nu_\rho}Q_2f-\varepsilon\}.\label{eq:tensor-bad-event}
\end{align}
By the law of large numbers, we can choose $L$ large enough such that
\begin{align}
    &\mu_p^{\otimes L}(\cA_L)>\frac12,\label{eq:large-A}\\
    &\nu_\rho^{\otimes L}(\cB_L)\ge\rho,\label{eq:large-B}
\end{align}
where $\mu_p^{\otimes L}$ and $\nu_\rho^{\otimes L}$ denote the product measures of $\mu_p$ and $\nu_\rho$ on $(2^{[N]})^L\cong2^{[N]\times[L]}$, respectively.

Finally, define $\nu_L=\nu_\rho^{\otimes L}(\,\cdot\mid\cB_L)$.
In particular, $\nu_L$ is carried by $\cB_L$.
We claim that it is also $p$-spread.
Indeed, for every nonempty $I\subseteq[N]\times[L]$, by
\eqref{eq:thinning-spread} and \eqref{eq:large-B},
\begin{equation}\label{eq:p-spread}
    \nu_L(H_I)
    \le\frac{\nu_\rho^{\otimes L}(H_I)}{\nu_\rho^{\otimes L}(\cB_L)}
    \le\frac{(\rho p)^{|I|}}{\rho}
    \le p^{|I|}.
\end{equation}
In addition, the inequality holds trivially for $I=\varnothing$.
Therefore, in light of \eqref{eq:large-A} and \eqref{eq:p-spread}, Corollary~\ref{cor:frac} implies that there exist sets $A_1,A_2\in\cA_L$ and $B\in\cB_L$ such that $B\subseteq A_1\cup A_2$.
Putting together \eqref{eq:tensor-family} and \eqref{eq:tensor-bad-event}, we have
\[
    \EE_{\nu_\rho}Q_2f-\varepsilon\le Q_2f_L(B)\le f_L(A_1)+f_L(A_2)\le2\EE_{\mu_p}f+2\varepsilon.
\]
Letting $\varepsilon\to0$ and then $\rho\to1$ gives \eqref{eq:fi}.
\end{proof}

The second is a coupling theorem, an analog of the Gaussian-sum formulation of the convexity problem in \cite[Problem~0.2]{Song2026} and \cite[Problem~2]{HST}.
\begin{theorem}\label{thm:coup}
There are jointly distributed random sets $X,R_1,R_2\in2^{[N]}$ such that
\[
    X\sim\nu, \qquad R_j\sim\mu_p\quad(j\in\{1,2\}), \qquad X\subseteq R_1\cup R_2\quad\text{almost surely}.
\]
\end{theorem}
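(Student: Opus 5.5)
The plan is to derive Theorem~\ref{thm:coup} from Theorem~\ref{thm:fi} by linear programming duality, in the form of a Kantorovich/Strassen-type argument. Since $2^{[N]}$ is finite, we seek a probability measure $\pi$ on the finite set
\[
\Omega=\{(x,r_1,r_2)\in(2^{[N]})^3: x\subseteq r_1\cup r_2\}
\]
whose marginals are $\nu,\mu_p,\mu_p$. This is a finite feasibility LP. The set $\mathcal C$ of marginal triples $(\alpha,\beta_1,\beta_2)$ realized by probability measures on $\Omega$ is a compact convex polytope: it is the convex hull of the triples $(\delta_x,\delta_{r_1},\delta_{r_2})$ with $(x,r_1,r_2)\in\Omega$. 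We must show that $(\nu,\mu_p,\mu_p)\in\mathcal C$.

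We argue by contradiction and separate. If $(\nu,\mu_p,\mu_p)\notin\mathcal C$, there are functions $u,f_1,f_2$ on $2^{[N]}$ such that
\[
\EE_\nu u-\EE_{\mu_p}f_1-\EE_{\mu_p}f_2>\max_{(x,r_1,r_2)\in\Omega}\bigl(u(x)-f_1(r_1)-f_2(r_2)\bigr).
\]
After shifting $u$ by a constant we may take the right-hand side to be $0$. Then $u(x)\le f_1(r_1)+f_2(r_2)$ whenever $x\subseteq r_1\cup r_2$, so
\[
u(x)\le \min_{x\subseteq r_1\cup r_2}\bigl(f_1(r_1)+f_2(r_2)\bigr).
\]

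The point needing care is that Theorem~\ref{thm:fi} involves a single function $f$, while separation produces two functions $f_1,f_2$. We symmetrize with $f=(f_1+f_2)/2$. Roughly, $\min(f_1(r_1)+f_2(r_2))$ and $\min(f_2(r_1)+f_1(r_2))$ are both at least $u(x)$, but averaging these does not give $Q_2 f$ in general. The cleaner route is to symmetrize the coupling side. The set $\Omega$ and the target marginals are invariant under swapping $r_1$ and $r_2$. So if $\pi$ is any feasible coupling, its swap-average is a symmetric feasible coupling. Conversely, we may restrict the separation to the symmetric subspace: work in the polytope of pairs $(\alpha,\beta)$ where $\beta=\tfrac12(\text{law of }r_1+\text{law of }r_2)$.

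Concretely, let $\mathcal C'$ be the set of pairs (law of $x$, average law of $r_1,r_2$) realized by measures on $\Omega$. If $(\nu,\mu_p)\in\mathcal C'$ via some $\pi$, then the swap-average of $\pi$ has both $r$-marginals equal to $\mu_p$, which gives the desired coupling. If $(\nu,\mu_p)\notin\mathcal C'$, separation gives $u,f$ with
\[
u(x)\le f(r_1)+f(r_2)\quad\text{on }\Omega,
\qquad
\EE_\nu u>2\EE_{\mu_p}f.
\]
(The factor $2$ is absorbed into the normalization of $f$.) Therefore $u\le Q_2f$ pointwise, and
\[
\EE_\nu Q_2 f\ge\EE_\nu u>2\EE_{\mu_p}f,
\]
which contradicts \eqref{eq:fi}.

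The main obstacle is this reduction to the symmetric LP, i.e.\ checking that the separating functional on $\mathcal C'$ has exactly the form $\EE_\alpha u-2\EE_\beta f$. Everything else is routine finite-dimensional duality, and compactness is automatic because $\Omega$ is finite.
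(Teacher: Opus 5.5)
Your argument is correct and is essentially the paper's. Both reduce the coupling, by finite LP duality (Farkas' lemma in the paper, point--polytope separation for you), to Theorem~\ref{thm:fi}, using the $r_1\leftrightarrow r_2$ symmetry of $\Omega$ to pass from two dual functions to one; you apply the symmetry to the coupling, whereas the paper applies it to the dual functions.

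Your worry about the dual route is unfounded. For each fixed $(r_1,r_2)$ with $x\subseteq r_1\cup r_2$ you have both $u(x)\le f_1(r_1)+f_2(r_2)$ and $u(x)\le f_1(r_2)+f_2(r_1)$. Averaging these \emph{before} minimizing gives $u(x)\le f(r_1)+f(r_2)$ with $f=(f_1+f_2)/2$, hence $u\le Q_2f$ directly, which is exactly what the paper does. Even averaging the two minima suffices, since a sum of minima is at most the minimum of the sum, so the inequality goes the right way.
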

\begin{proof}
As in Talagrand's proof of \cite[Proposition~5.1]{HST} and \cite[Lemma~2.6]{Talagrand2026}, our proof mechanism is a version of the Hahn-Banach theorem. 

Define
\[
    \Gamma=\bigl\{(x,r_1,r_2)\in(2^{[N]})^3:x\subseteq r_1\cup r_2\bigr\}.
\]
The desired coupling exists precisely when we can choose a nonnegative function $\pi(x,r_1,r_2)$ on $\Gamma$ such that
\begin{align}
    \sum_{\substack{r_1,r_2:\\(x,r_1,r_2)\in\Gamma}}
    \pi(x,r_1,r_2)
    &=\nu(x) &&(x\in2^{[N]}),\label{eq:x-margin}\\
    \sum_{\substack{(x,r_1,r_2)\in\Gamma:\\r_j=r}}
    \pi(x,r_1,r_2)
    &=\mu_p(r) &&(j\in\{1,2\},\ r\in2^{[N]}).\label{eq:r-margin}
\end{align}
We can write the system of equations \eqref{eq:x-margin} and \eqref{eq:r-margin} in matrix form:
\begin{equation}\label{eq:system}
    M\pi=b,\qquad \pi\geq0,
\end{equation}
where $M$ is a zero-one matrix and $b$ is a concatenation of $\nu$ and two copies of $\mu_p$.
By Farkas' lemma, \eqref{eq:system} is solvable if and only if
\[
    M^{\mathsf T}u\geq0\Longrightarrow b^{\mathsf T}u\geq0\quad\text{for every $u$}.
\]
Decompose $u$ into three functions: $-h(x)$ for the coordinates in \eqref{eq:x-margin} and $g_1(r), g_2(r)$ for the coordinates in \eqref{eq:r-margin}. 
The condition $M^{\mathsf T}u\geq0$ amounts to
\[
    h(x)\leq g_1(r_1)+g_2(r_2)\quad\text{whenever }x\subseteq r_1\cup r_2.
\]
Let $g=(g_1+g_2)/2$.  
Applying the preceding inequality both to $(r_1,r_2)$ and to $(r_2,r_1)$, and then averaging, gives
\[
    h(x)\le g(r_1)+g(r_2)
    \qquad\text{whenever }x\subseteq r_1\cup r_2.
\]
Hence $h\leq Q_2g$.
By Theorem~\ref{thm:fi}, we have 
\[
    \EE_\nu h
    \leq \EE_\nu Q_2g
    \leq 2\EE_{\mu_p}g
    = \EE_{\mu_p}g_1+\EE_{\mu_p}g_2,
\]
which is exactly $b^{\mathsf T}u\geq0$.
\end{proof}

\begin{remark}
For completeness, we record that Theorem~\ref{thm:coup} recovers Corollary~\ref{cor:frac}.  
Indeed, if $\mu_p(\cA)>1/2$, the union bound gives
\[
     \PP(R_1,R_2\in\cA)
     \geq1-\PP(R_1\notin\cA)-\PP(R_2\notin\cA)
     \geq2\mu_p(\cA)-1>0.
\]
On this event, $X\in\Cov_2(\cA)$, so $\nu$ puts positive probability on $\Cov_2(\cA)$.
\end{remark}

\section*{Acknowledgments}
The author thanks Jinyoung Park for piquing his interest in this area and for feedback on an earlier version of the manuscript.

\bibliographystyle{amsplain}

\end{document}